\def\namedlabel#1#2{\begingroup
 #2%
 \def\@currentlabel{#2}%
 \phantomsection\label{#1}\endgroup
}
\theoremstyle{plain}
\newtheorem*{theorem*}{Theorem}
\newtheorem*{thmex*}{Theorem~\ref{example}}
\newtheorem*{thmasymp*}{Theorem~\ref{thmAsymp}}
\newtheorem{theorem}{Theorem}[section]
\newtheorem{corollary}[theorem]{Corollary}
\newtheorem{proposition}[theorem]{Proposition}
\theoremstyle{definition}
\newtheorem{definition}[theorem]{Definition}
\newcommand{\ben}{\begin{enumerate}}
\newcommand{\een}{\end{enumerate}}
\newcommand{\ed}{\end{document}}
\definecolor{rrr}{rgb}{.9,0,.1}
\definecolor{rr}{rgb}{.8,0,.3}
\begin{document}

\title[Virtual Multicrossings and Petal Diagrams]{Virtual Multicrossings and Petal Diagrams for Virtual Knots and Links}

\author[Adams et al]{Colin Adams}
\address{Department of Mathematics, Williams College, Williamstown, MA 01267}
\email{cadams@williams.edu}
\author[]{Chaim Even-Zohar}
\address{ Alan Turing Institute, British Library, 96 Euston Road, London, England, NW1 2DB, United Kingdom} \email{chaim@ucdavis.edu}
\author[]{Jonah Greenberg}
\address{325 West End Ave, New York, NY 10023}
\email{jonah325@gmail.com}
\author[]{Reuben Kaufman}
\address{506 West 113th street, Apt. 5a, New York, NY 10025}
\email{reubenkaufman@gmail.com} 
\author[]{David Lee}
\address{Department of Mathematics, Williams College, Williamstown, MA 01267}
\email{djl5@williams.edu}
\author[]{Darin Li}
\address{Department of Mathematics, Williams College, Williamstown, MA 01267}
\email{darinli@gmail.com}
\author[]{Dustin Ping}
\address{1525 Japaul Ln, San Jose, CA 95132}
\email{dping1997@gmail.com.}
\author[]{Theodore Sandstrom} 
\address{Department of Mathematics, Yale University, 
PO Box 208283, New Haven, CT 06520-8283}
\email{theo.sandstrom@yale.edu}
\author[]{Xiwen Wang}
\address{Department of Mathematics, Williams College, Williamstown, MA 01267}
\email{xw2@williams.edu}

\begin{abstract} Multicrossings, which have previously been defined for classical knots and links, are extended to virtual knots and links. In particular, petal diagrams are shown to exist for all virtual knots. 
\end{abstract}

\maketitle

\section{Introduction}

One of the first invariants considered for knots and links was the crossing number $c(L)$, which is the least number of double points for any projection of~$L$, where only two strands can cross at a point. In unpublished work, (Theorem 2.5.2 of \cite{Jones}), Vaughan Jones proved that every knot has a projection with only triple crossings, where instead of two strands crossing, three strands cross, each bisecting the crossing. Using a different method, this was proved independently in \cite{Adams1},  and it was generalized to multicrossings, where $n$ strands cross at a crossing, each bisecting the crossing and each at a distinct height as it passes through the crossing. It was shown that every link has an $n$-crossing projection for $n \geq 2$, meaning that all crossings in the projection are multicrossings with $n$ strands. Hence, one can define $c_n(L)$ to be the least number of $n$-crossings in any $n$-crossing projection of $L$. Thus, there is a spectrum of crossing numbers associated to any given knot or link, starting at the usual crossing number $c_2(L) = c(L)$. These numbers have been further investigated in \cite{Adams3, Adams4, Adams5, Adams2, Adams6}. 

In \cite{kauffman}, Kauffman introduced virtual knots, with projections that also have {\it virtual crossings}, in addition to the usual crossings, called {\it classical crossings}. These projections are equivalent under the usual Reidemeister moves and some additional virtual Reidemeister moves. 

In this paper, we extend the theory of multicrossings to virtual knots and links. In particular, we prove that for every integer $n \geq 2$, every virtual knot or link has a projection consisting of virtual $n$-crossings. 

\begin{definition} A {\it virtual $n$-crossing} is defined as follows. First is an ordering on the integers from 1 to $n$, which corresponds to the heights of the strands passing through a vertical axis read clockwise from the top view, with $1$ being the top one.  By convention we always start with the segment labelled 1.  Second is a subset of the pairings of the integers, which correspond to crossings between strands that are virtual. We further require that if a pair of strands do share a virtual crossing, then any other strand must share a virtual crossing with at least one of them. In other words, no subset of three strands has one virtual and two classical crossings making up the three pairwise double crossings. 
\label{virtualncrossing}
\end{definition}

From the side view, we represent these virtual pairs by arcs connecting the two strands that correspond to a virtual crossing. In Figure \ref{sideview}, we see the side view of a virtual 4-crossing that would be denoted $$\{1243; \, (1,2), (1,3), (2,4), (3,4)\}.$$ We also show one of its possible resolutions, meaning that from the top view, we isotope each segment so they no longer intersect at the central point and any two intersect in a single point not shared by any others. The classical crossings in the resolution are forced to be over or under by the relative heights of the strands.

\begin{figure}[htbp]
    \centering
    \includegraphics[width=.6\linewidth]{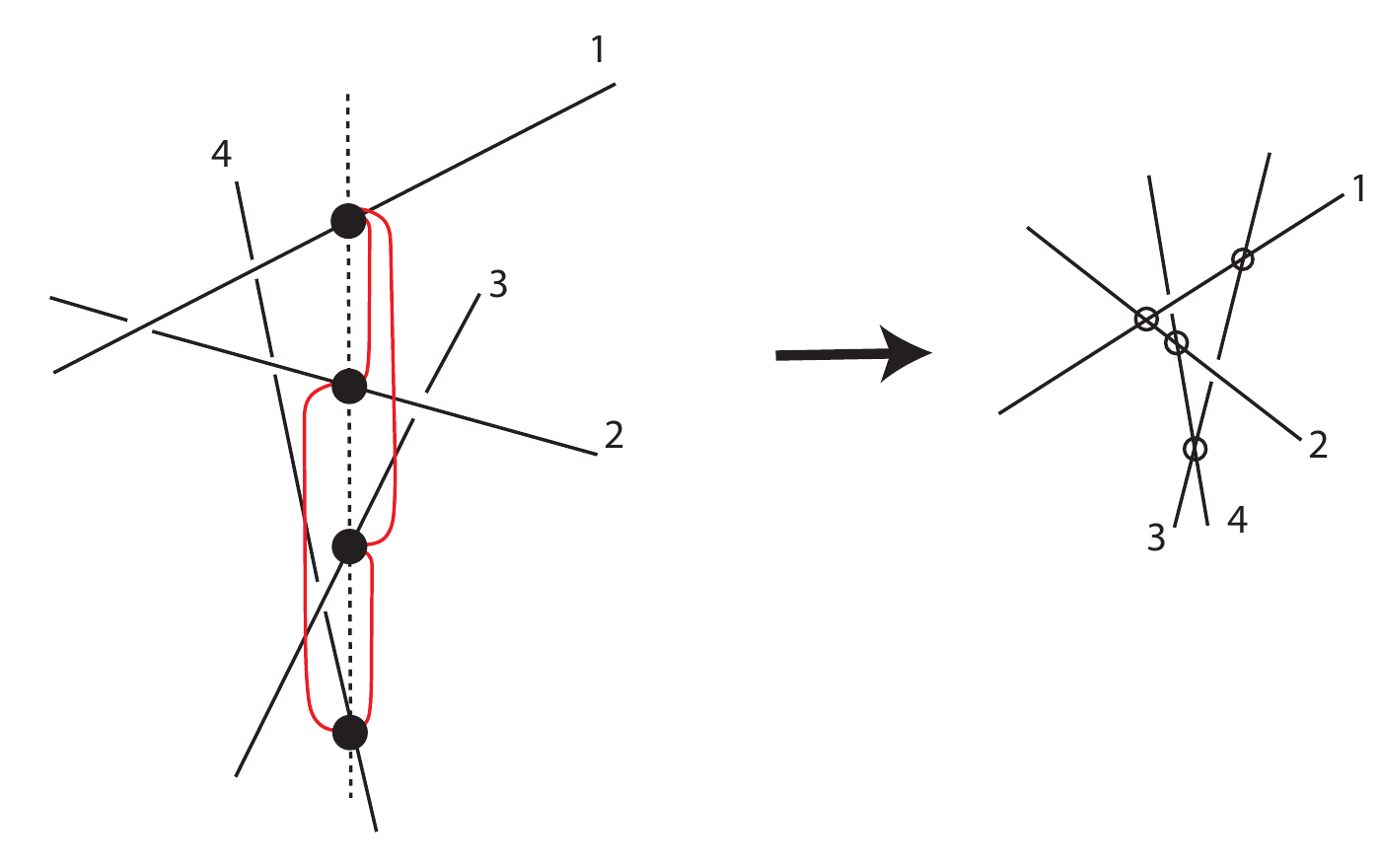}
    \caption{A virtual 4-crossing and one of its resolutions.}
    \label{sideview}
\end{figure}

See Figure \ref{resolutions} for an example of two different resolutions of a given virtual multicrossing. Note that any one resolution must be related to any other by a sequence of valid classical and virtual Reidemeister moves. 

\begin{figure}[htbp]
    \centering
    \includegraphics[width=.6\linewidth]{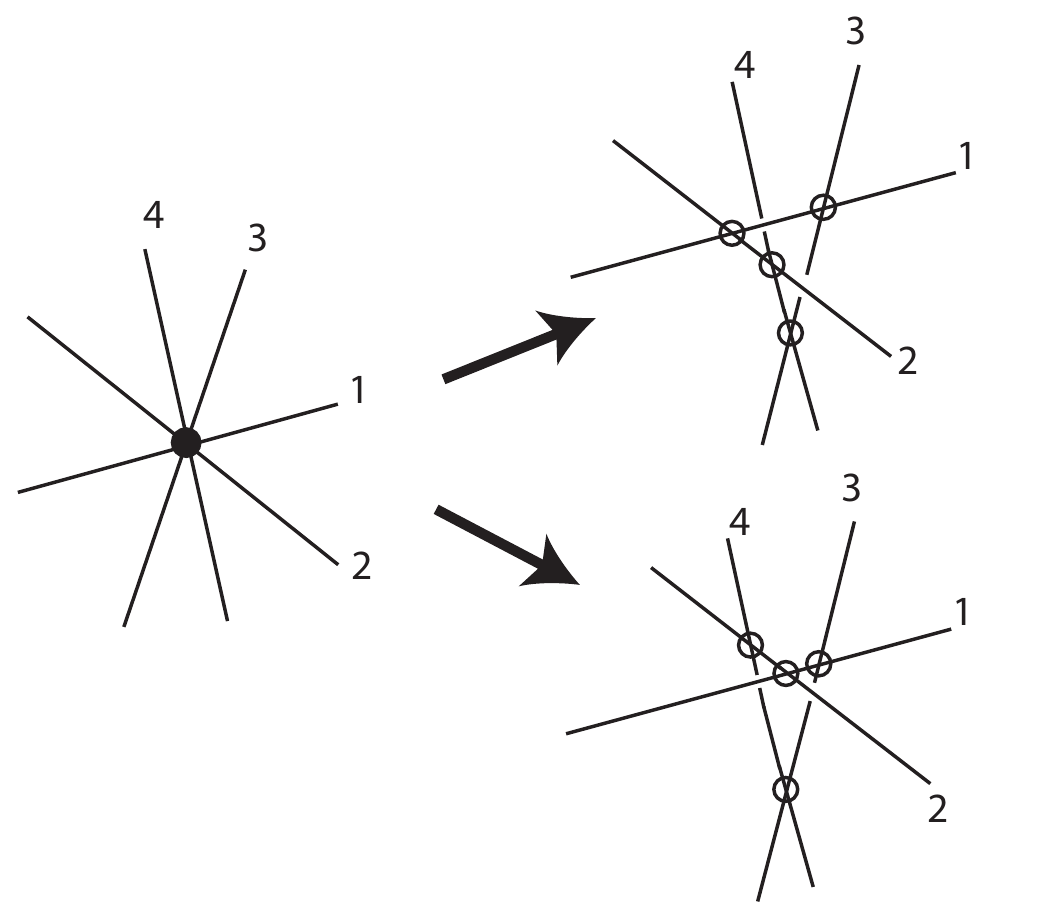}
    \caption{Two different resolutions of the same virtual 4-crossing.}
    \label{resolutions}
\end{figure}

The reason we disallow three strands with two classical and one virtual crossing is that if present when resolving the $n$-crossing into its constituent double crossings, we would have to allow a forbidden move or forbidden detour move as in Figure \ref{forbiddenmove} to get us from one resolution to another. These moves are not allowed in virtual knot theory, as together with the classical and virtual Reidemeister moves, they unknot all virtual knots (cf. \cite{Kanenobu}, \cite{Nelson}, \cite{Ichihara}). See Figure \ref{newforbiddenmove}.

\begin{figure}[htbp]
    \centering
    \includegraphics[width=.6\linewidth]{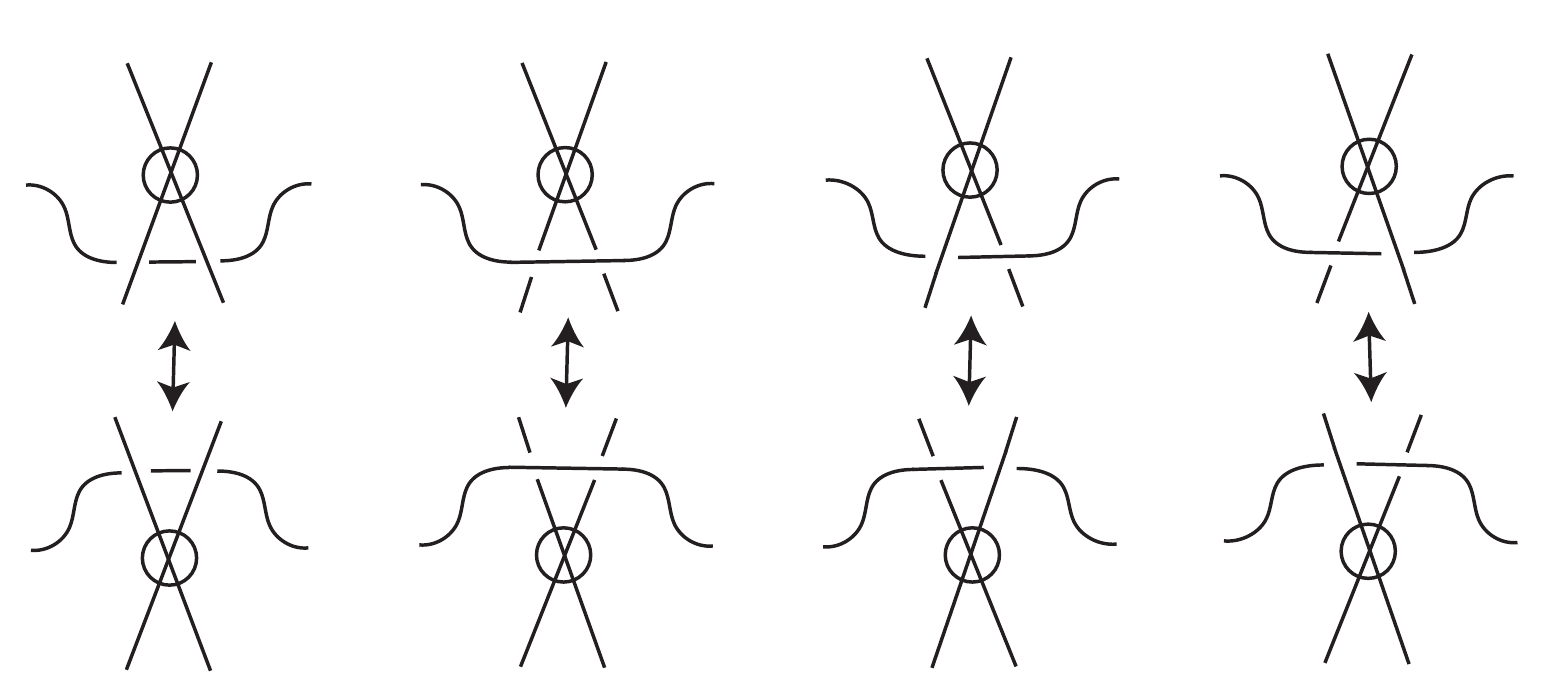}
    \caption{Forbidden moves (left) and forbidden detour moves (right) disallowed in virtual knot theory.}
    \label{forbiddenmove}
\end{figure}


\begin{figure}[htbp]
    \centering
    \includegraphics[width=.6\linewidth]{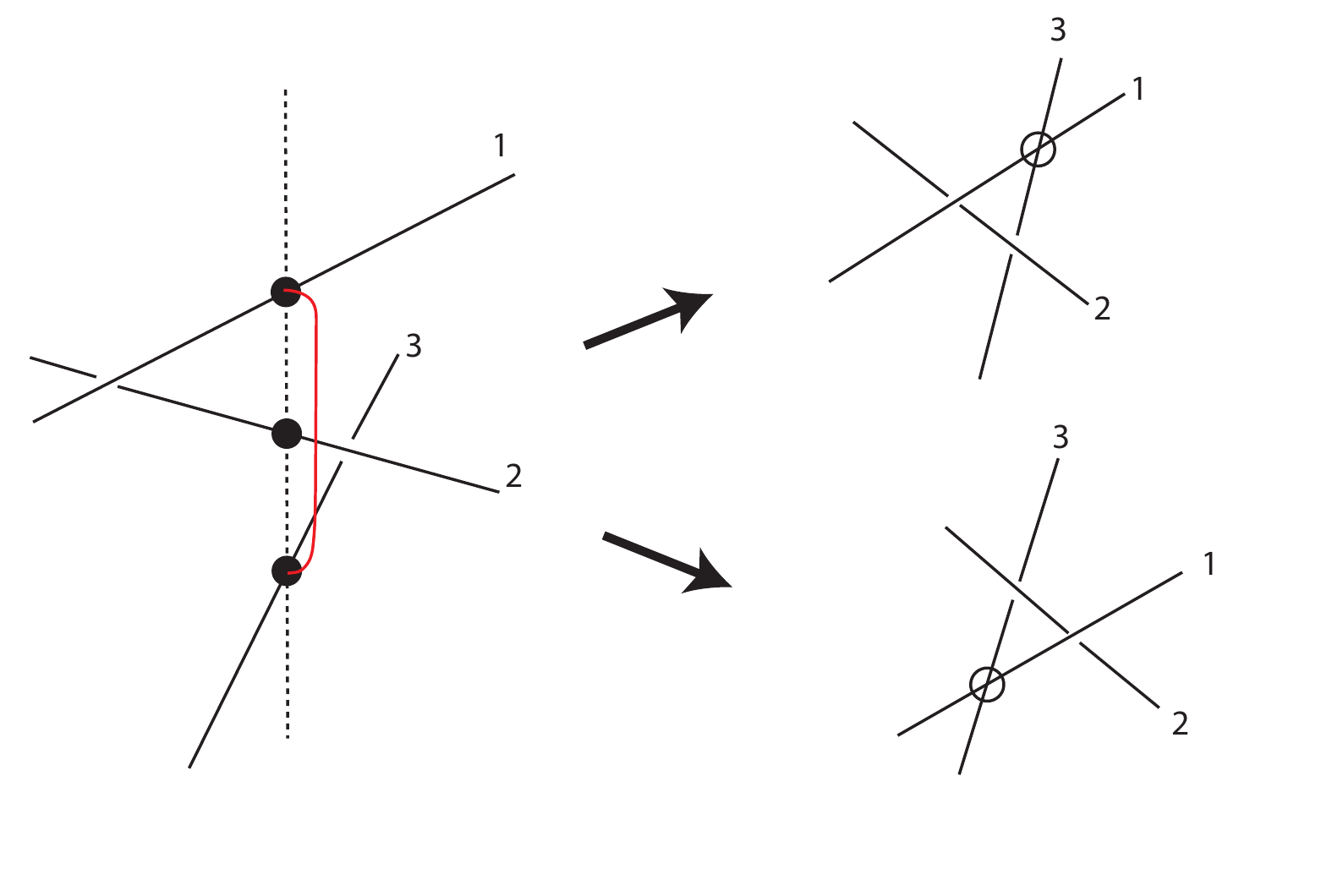}
    \caption{We would need a forbidden detour move to get from one resolution to the other. So this is not a valid virtual 3-crossing.}
    \label{newforbiddenmove}
\end{figure}

In Section 2, we consider virtual triple crossings, and discuss the various types of virtual triple crossings that can be used to obtain every classical and virtual link and the relationships between the various crossing numbers. 

In Section 3, we generalize to show that every link has an almost virtual n-crossing projection and therefore virtual crossing number for all $n \geq 3$.

In Section 4, we investigate the number of different types of virtual $n$-crossings, computing it for small $n$, and finding its asymptotics for large~$n$. 

In Section 5, we consider petal diagrams of virtual knots. A petal diagram of a classical knot is a diagram with a single multicrossing, such that the loops attached to the multicrossing are not nested, as in Figure \ref{petalexample}. The integers denote the relative height that the particular strand passes through the central axis, with the lower integer being the higher height. We show that every virtual knot has a petal diagram with a single valid virtual multicrossing. Therefore, petal number is well-defined for virtual knots.

\begin{figure}[htbp]
    \centering
    \includegraphics[width=.4\linewidth]{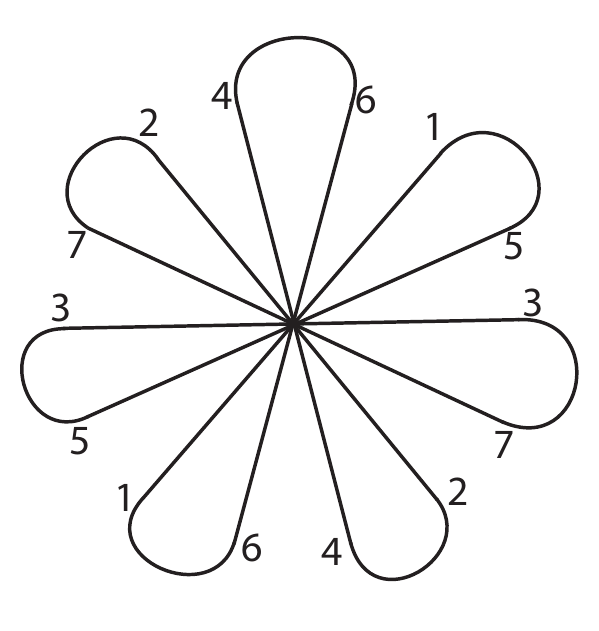}
    \caption{An example of a petal projection of a classical knot.}
    \label{petalexample}
\end{figure}

For notational purposes, given a virtual link $L$,  we define $c_2^{cl}(L)$ to be the least number of classical crossings over all projections, classical and virtual. Define $c_2^{V}(L)$ to be the least number of virtual crossings. And $c_2^T(L)$ is the least total number of crossings of any form over all projections.

Note that in \cite{Manturov}, Manturov proved that for any classical knot or link $L$, $c_2^{cl}(L) = c_2(L)$, which will prove useful.

\section{Virtual Triple Crossing Projections}

We begin by considering the possible types of  virtual triple crossings. We know that every triple crossing resolves into three double crossings, from here on called 2-crossings. Then, in a virtual triple crossing projection, every virtual triple crossing resolves into one of three types: 
 \renewcommand{\theenumi}{\Roman{enumi}}
\begin{enumerate}
    \item 	three classical 2-crossings
\item one classical 2-crossing and two virtual 2-crossings
\item three virtual 2-crossings 
\end{enumerate}

See Figure \ref{Crossing_Types} for examples. As noted previously, we cannot have a virtual triple crossing resolve into two classical 2-crossings and one virtual 2-crossing, since this resolution invokes a forbidden move.

\begin{figure}[htbp]
    \centering
    \includegraphics[width=.6\linewidth]{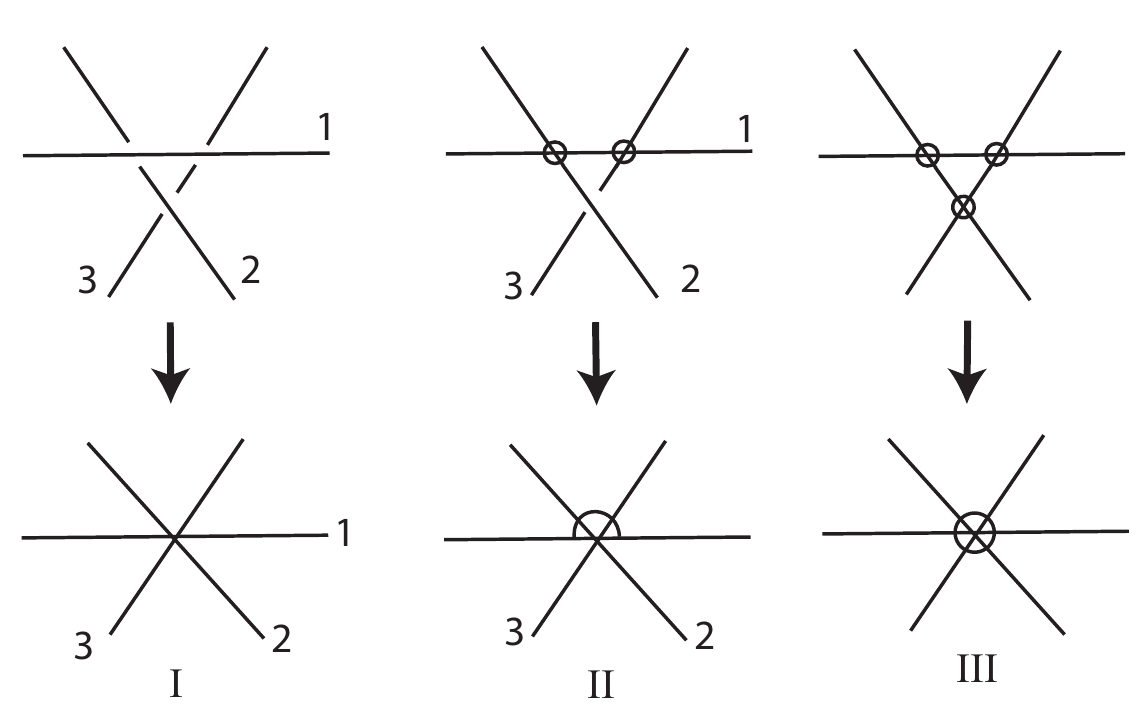}
    \caption{Triple crossing resolutions. Observe that we denote Type II with a half-circle, and Type III with a circle. }
    \label{Crossing_Types}
\end{figure}

Now, we can define four invariants of knots and links related to the types of virtual triple crossings that appear. 

First, we show that every knot or link has a virtual triple crossing projection if we allow Type I, II, and III 3-crossings to appear. We know that every virtual 2-crossing projection $P$ of a knot or a link has a collection of crossing covering circles, which is to say, a collection of disjoint topological circles in the projection plane that only intersect the projection bisecting its crossings such that every crossing intersects a circle. (See \cite{Adams1}).

If a crossing covering circle bisects only classical 2-crossings, and there are no virtual crossings in the disk bounded by the circle, then we choose one of the crossings that it intersects. We can take an over-strand of that crossing and stretch it around the entire crossing covering circle, laying it on top of each of the other crossings intersected by that circle, as shown in Figure~\ref{c3_I}. Note that we eliminate the crossing that we initially chose by stretching its original over-strand over other crossings, ending on the other side of the original under-strand. Therefore, if the crossing covering circle originally intersected $n$ classical double crossings in $P$, then this folding move creates $n-1$ triple crossings. This always works on a classical projection, but cannot be applied if there are any virtual crossings in the disk bounded by a crossing covering circle. 

\begin{figure}[htbp]
    \centering
    \includegraphics[width=.9\linewidth]{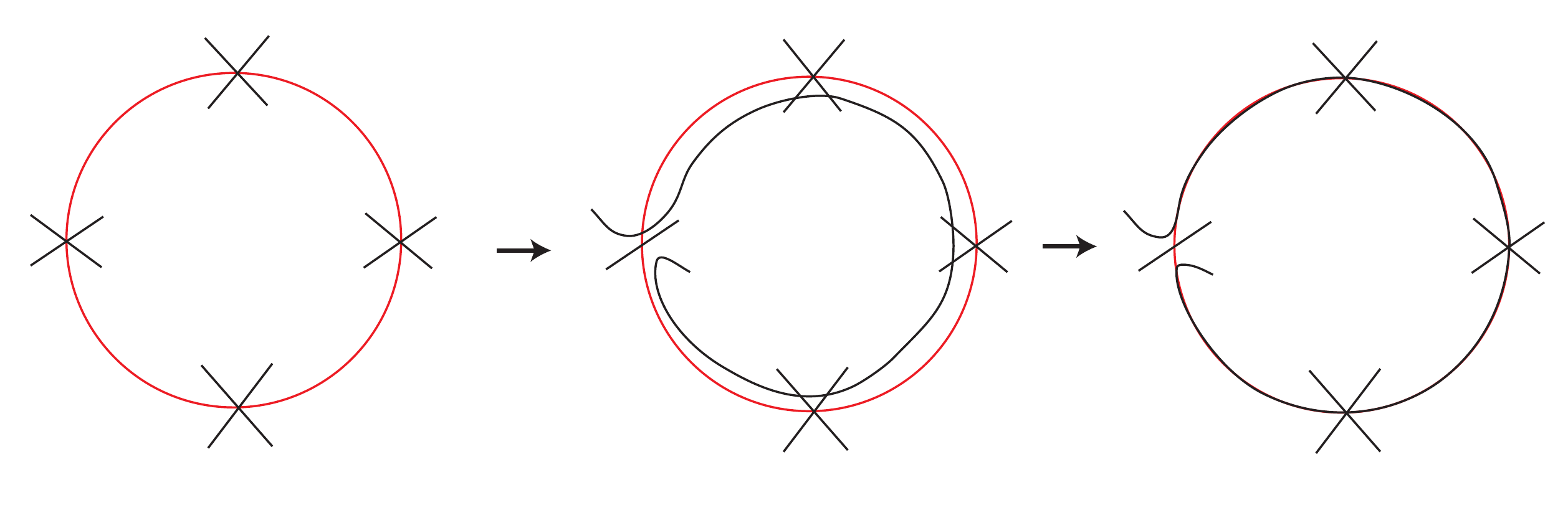}
    \caption{Folding move turning four double crossings into three triple crossings (\cite{Adams1}). }
    \label{c3_I}
\end{figure}

If we perform this folding move over every crossing covering circle in a classical projection $P$, we end up with a triple crossing number that is less than the original double crossing number. 
 
If a crossing covering circle bisects a virtual 2-crossing, then we can fold starting from a virtual 2-crossing, using a detour move (Figure \ref{Detour}). This move changes classical 2-crossings on the circle into Type II 3-crossings, and changes other virtual 2-crossings on the circle into Type III 3-crossings, as in Figure~\ref{c3_II_III}. 

\begin{figure}[htbp]
    \centering
    \includegraphics[width=.7\linewidth]{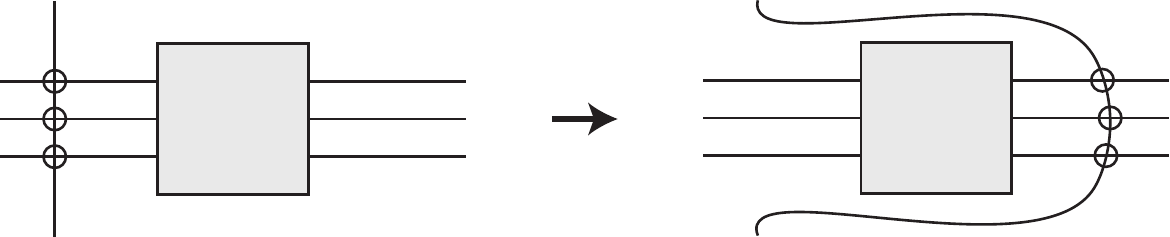}
    \caption{A detour move.}
    \label{Detour}
\end{figure}

\begin{figure}[htbp]
    \centering
    \includegraphics[width=.9\linewidth]{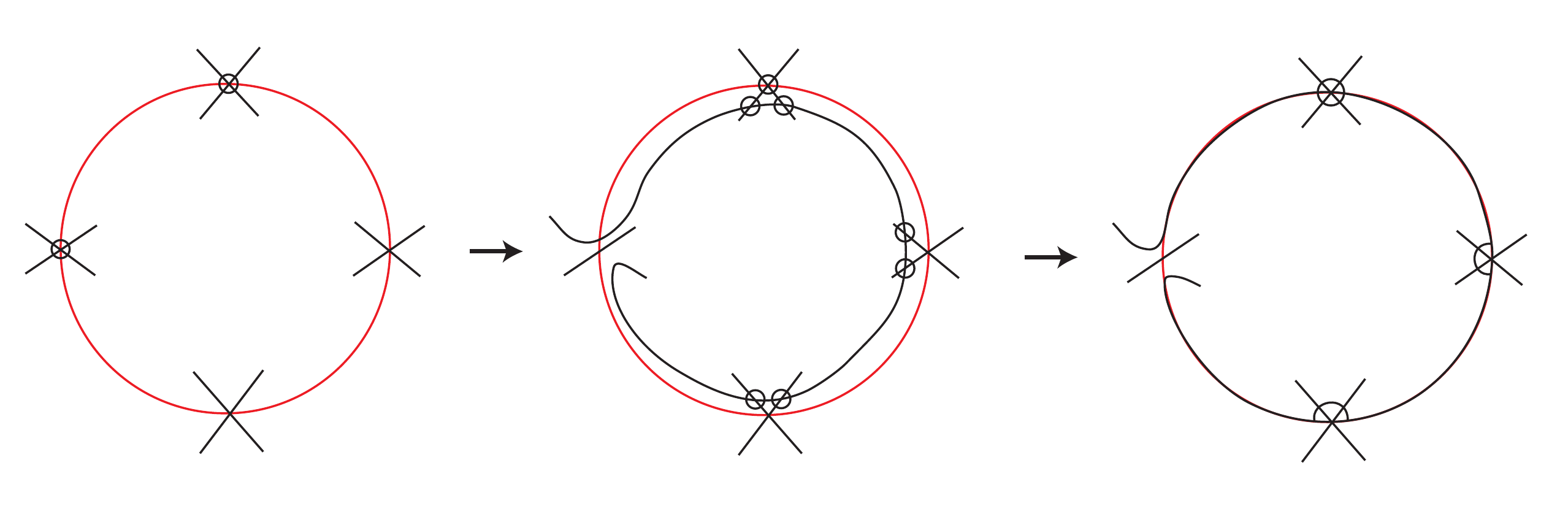}
    \caption{We bring a virtual strand along the interior of the crossing covering circle. Observe the third diagram is the same if we bring the virtual strand along the exterior.} 
    \label{c3_II_III}
\end{figure}

If a crossing covering circle only bisects classical crossings, but either we want to avoid Type I 3-crossings or there are virtual crossings in the disk bounded by the crossing covering circle, we can use the following trick. Using a so-called virtual Reidemeister move IV, create a virtual 2-crossing on a strand near a 2-crossing on the circle, as in Figure \ref{C3_II_III_welldefined}. Then, folding from this virtual 2-crossing will turn the classical 2-crossings into Type II 3-crossings.

Therefore, we can define $c_3^{I,II,III}(L)$ for any knot or link $L$ to be the least number of  3-crossings in any virtual 3-crossing projection of $L$ where Type I, II, III 3-crossings are allowed. This will also be denoted $c_3^T(L)$ as it is the least total number of 3-crossings of any type in a virtual 3-crossing projection of the link~$L$. Note that $c_3^T(L) < c_2^T(L)$ since if the link is classical, this is already known, and if the link is not classical, then in any minimal virtual crossing projection, there will be at least one virtual crossing on a crossing covering circle and the move in Figure \ref{c3_II_III} will decrease the number of crossings. 

Note also that by restricing ourselves to the moves in Figures \ref{c3_II_III} and \ref{C3_II_III_welldefined} we can choose our algorithm to only produce Type II and Type III 3-crossings. Therefore, we can define $c_3^{II,III}(L)$ for any knot or link $L$ to be the least number of virtual 3-crossings in any virtual 3-crossing projection of $L$ where only Type II and III 3-crossings are allowed. (Note: the method of Jones from \cite{Jones} to show every classical knot has a triple crossing projection can easily be modified to prove every virtual knot has a projection with only Type II and Type III virtual crossings.)

\begin{figure}[htbp]
    \centering
    \includegraphics[width=.9\linewidth]{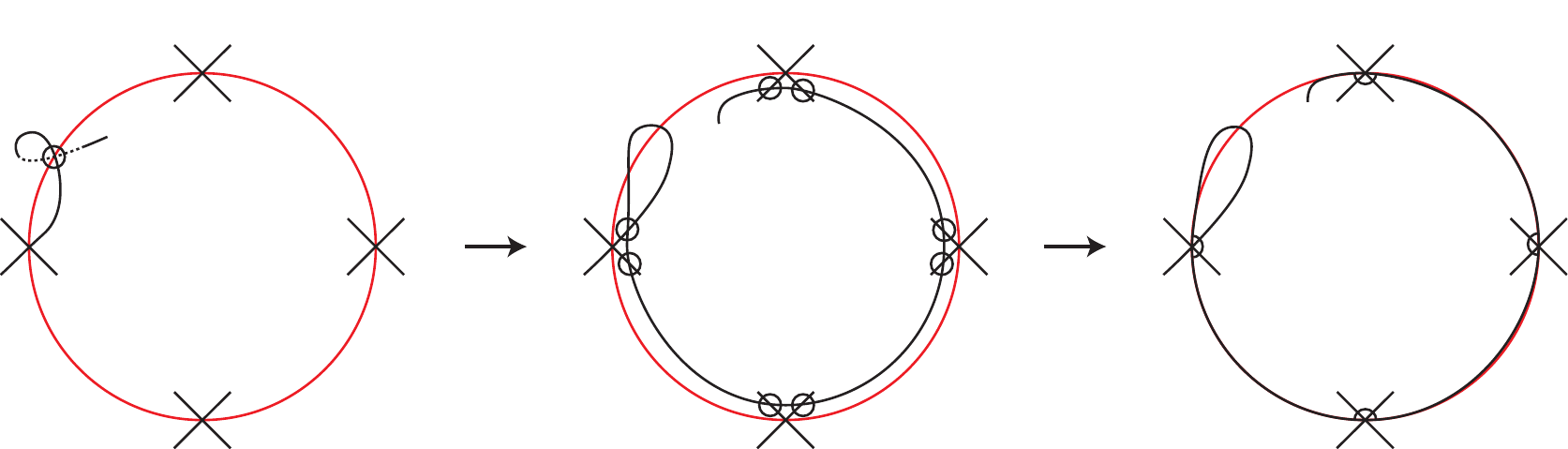}
    \caption{We cut along the dotted line, and detour along the interior of the crossing covering circle.}
    \label{C3_II_III_welldefined}
\end{figure}

It is also true that every knot or link has a virtual triple crossing projection if we restrict to only Type II 3-crossings. We can do the following trick to turn a Type III 3-crossing into Type II 3-crossings. First, resolve a Type III 3-crossing into its three virtual 2-crossings. Next, we perform Reidemeister moves to create three instances of resolved “forbidden 3-crossings,” an area with two classical 2-crossings and one virtual 2-crossing (Figure~\ref{Forbidden_trick}). We fold each “forbidden 3-crossing,” starting from the virtual 2-crossing to create two Type II 3-crossings (Figure~\ref{Forbidden_trick_cont}). Thus, we can turn every Type~III 3-crossing into six Type II 3-crossings. Hence, we can define $c_3^{II}(K)$ and $c_3^{I, II}(K)$  to be the least number of 3-crossings in any virtual 3-crossing projection of $K$ where only Type II 3-crossings are allowed or where only Type~I and II 3-crossings are allowed.

\begin{figure}[htbp]
    \centering
    \includegraphics[width=.5\linewidth]{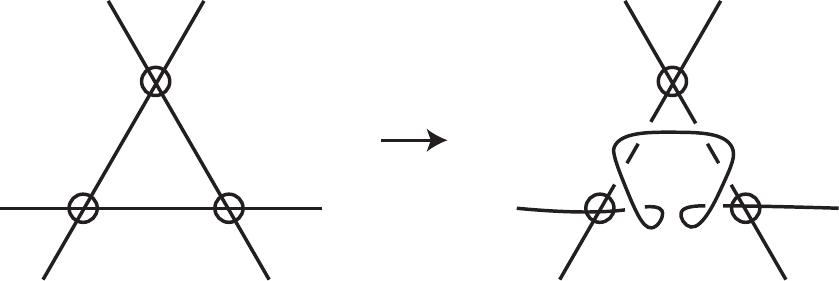}
    \caption{The third diagram enlarges the three instances of a resolved “forbidden 3-crossing.”}
    \label{Forbidden_trick}
\end{figure}

\begin{figure}[htbp]
    \centering
    \includegraphics[width=.9\linewidth]{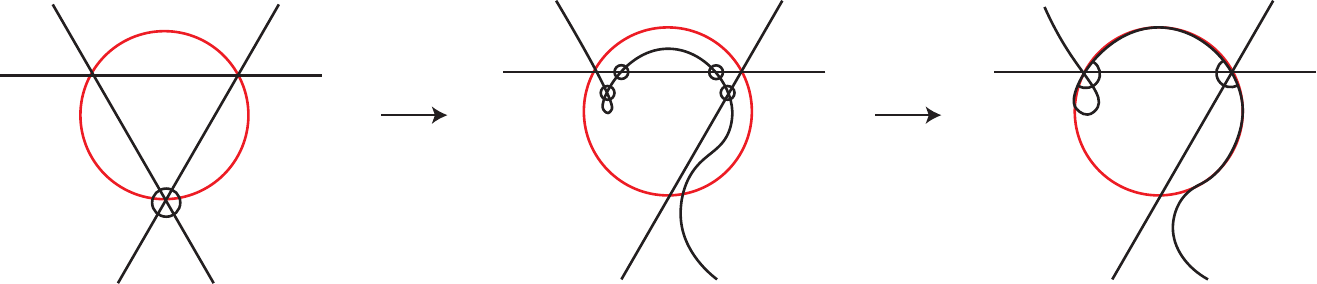}
    \caption{Turning a forbidden 3-crossing into two Type~II 3-crossings.}
    \label{Forbidden_trick_cont}
\end{figure} 

The following relations between triple crossing numbers of virtual links are immediate from the definitions:
$$c_3^T(L) \leq c_3^{II,III}(L) \leq c_3^{II}(L)$$
$$c_3^T(L) \leq c_3^{I,II}(L) \leq c_3^{II}(L)$$

\begin{theorem} For a nontrivial classical knot or link $L$, $c_3^T (L) < c_3^{II,III}(L) =c_3^{II}(L)  = c_2(L)$.
\end{theorem}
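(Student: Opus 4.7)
The plan is to prove the chain of (in)equalities from right to left: first I establish $c_3^{II}(L) = c_2(L)$, then I deduce $c_3^{II,III}(L) = c_2(L)$ by sandwiching, and finally I obtain the strict inequality $c_3^T(L) < c_2(L)$ using a classical Type I fold.

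For $c_3^{II}(L) = c_2(L)$, I prove both inequalities. For the upper bound, I start from a minimal classical projection $P$ of $L$, which has $c_2(L)$ crossings, and choose a collection of crossing-covering circles. Since $P$ has no virtual crossings, each circle bisects only classical 2-crossings, so the Type II variant of the folding move described around Figure \ref{C3_II_III_welldefined} applies to every circle: insert a virtual 2-crossing by a virtual R-IV move and fold from it, converting each of the $n$ classical 2-crossings on the circle into one Type II 3-crossing while the inserted virtual 2-crossing is absorbed by the fold. Summing over all covering circles produces a Type II only projection of $L$ with exactly $c_2(L)$ 3-crossings. For the reverse inequality, any Type II only projection with $m$ 3-crossings resolves to a virtual 2-crossing diagram of $L$ with exactly $m$ classical 2-crossings, since each Type II contributes one classical and two virtual 2-crossings, and Manturov's theorem then forces $m \geq c_2^{cl}(L) = c_2(L)$.

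The equality $c_3^{II,III}(L) = c_2(L)$ follows by sandwiching. The upper bound $c_3^{II,III}(L) \leq c_3^{II}(L) = c_2(L)$ is recorded in the inequality chain just before the theorem. For the lower bound, any Type II/III projection of $L$ with $a$ Type II and $b$ Type III 3-crossings has exactly $a$ classical 2-crossings in its resolution, since a Type III resolves entirely to virtual 2-crossings, and Manturov then forces $a \geq c_2(L)$, hence $a + b \geq c_2(L)$. For the strict inequality, I use that $L$ is a nontrivial classical link: in a minimal classical projection there is a crossing-covering circle with at least two 2-crossings, and applying the classical (Type I) folding move of Figure \ref{c3_I} to that circle strictly decreases the crossing count, yielding a Type I only 3-crossing projection with fewer than $c_2(L)$ 3-crossings. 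This is precisely the classical inequality $c_3(L) < c_2(L)$ referenced in Section 2 as ``already known''. Since Type I 3-crossings are allowed in the count $c_3^T$, we conclude $c_3^T(L) \leq c_3(L) < c_2(L) = c_3^{II,III}(L)$.

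The main subtle point I expect is in the upper bound $c_3^{II}(L) \leq c_2(L)$: I need to verify that the virtual strand introduced by the R-IV move and pushed around a covering circle creates exactly one Type II 3-crossing at each classical 2-crossing on the circle, without introducing any stray crossings either inside or outside the disk bounded by the circle, and that the constructions on distinct covering circles do not interfere. This reduces to the local verification indicated by Figure \ref{C3_II_III_welldefined}, together with the observation that each circle together with its stretched virtual strand can be realized in a small neighborhood of the circle, avoiding all the other circles in the chosen collection.
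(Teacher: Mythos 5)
Your proposal is correct and follows essentially the same route as the paper: the upper bound $c_3^{II}(L)\leq c_2(L)$ via the virtual R-IV/fold trick of Figure~\ref{C3_II_III_welldefined}, the lower bounds for both $c_3^{II}$ and $c_3^{II,III}$ by counting classical 2-crossings in the resolution and invoking Manturov's theorem $c_2^{cl}(L)=c_2(L)$, and the strict inequality from the classical Type~I folding argument. The only difference is presentational: you are slightly more explicit about the bookkeeping (the absorbed virtual crossing, the $a+b\geq c_2(L)$ count, and non-interference of folds on distinct covering circles), all of which the paper leaves implicit.
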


\begin{proof} We have already explained why $c_3^T(L)$ is smaller that $c_2(L)$. We next show $c_3^{II}(L)=c_2(L)$. With the trick that turns every classical 2-crossing into a Type II 3-crossing (see Fig.~\ref{C3_II_III_welldefined}), we can change a 2-crossing projection of a classical link with $c_2$  crossings to obtain a virtual triple crossing projection with the same number of $c_2$ Type II 3-crossings. So, $c_3^{II}(L) \leq c_2(L)$. But every Type II 3-crossing, resolved, has one classical 2-crossing, so the projection of a link L with $c_3^{II}$ Type II 3-crossings has $c_3^{II}$  classical 2-crossings when resolved. Note that this resolved projection of $L$ is a virtual projection. We know from \cite{Manturov} that for a classical link, the least number of classical crossings over all projections, with or without virtual crossings, must occur in a classical projection. So, $c_2(L) \leq c_3^{II}(L)$.

Similarly, we show $c_3^{II,III}(L) = c_2(L)$ for classical knots and links. On the one hand, a virtual triple crossing projection with only Type~II and Type~III 3-crossings, when resolved, has at most $c_3^{II,III}$ classical 2-crossings. But then by \cite{Manturov}, $c_2(L) \leq c_3^{II,III}(L)$. On the other hand, $c_3^{II,III}(L) \leq c_3^{II}(L)=c_2(L)$ for classical knots and links.
\end{proof}

Now let us find these new invariants for some simple examples. \medskip
\renewcommand{\theenumi}{\roman{enumi}}
\begin{enumerate}
\item By enumerating ways to connect the ends of a virtual triple crossing without creating more crossings, we find knots or links $L$ with $c_3^T(L)$ or $c_3^{I,II}(L)$ equal to~1. The only nontrivial such links are the classical and virtual Hopf link, the latter being a Hopf link where one classical crossing has been made virtual. The only link with $c_3^{II,III}(L)$ or $c_3^{II}(L)$ equal to~1 is the virtual Hopf link. So, 
$$ c_3^T(L) =c_3^{II,III}(L) = c_3^{I,II}(L) =c_3^{II}(L) =1 $$ 
for the virtual Hopf link, and these invariants are all at least 2 for every nontrivial knot or link other than the Hopf link and virtual Hopf link.

\item Thus, all the virtual 3-crossing numbers of the classical trefoil knot $3_1$ are at least~2. Note that since the standard projection of the trefoil has one crossing covering circle, folding from one 2-crossing will give a projection with two Type I 3-crossings. So 
$$ c_3^T(3_1)=c_3^{I,II}(3_1)=2 $$ 
Since $3_1$ is a classical knot, 
$$ c_3^{II}(3_1)= c_3^{II,III}(3_1)=c_2(3_1)=3 $$ 
	
\item For the classical figure-8 knot $4_1$, folding along two crossing covering circles gives 
$$ c_3^T (4_1 )=c_3^{I,II} (4_1 )=2 $$ 
Since it is classical, 
$$ c_3^{II} (4_1 )=c_3^{II,III} (4_1 )=c_2(4_1 )=4 $$
	
\item For the virtual trefoil $VT$, detouring from the virtual 2-crossing as in Figure~\ref{Trefoil} gives two Type~II 3-crossings. Therefore, $$ c_3^T (VT)=c_3^{II,III} (VT)=c_3^{I,II} (VT) = c_3^{II}(VT)=2 $$
\end{enumerate}
 
\begin{figure}[htbp]
    \centering
    \includegraphics[width=.7\linewidth]{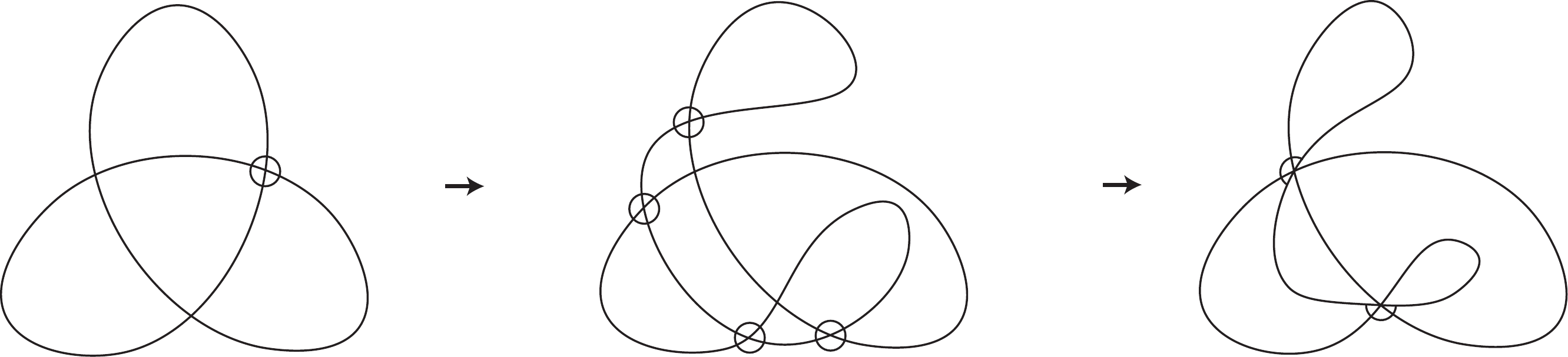}
    \caption{A virtual detour and a diagarm with two triple crossings for the virtual trefoil.}
    \label{Trefoil}
\end{figure}

As we have seen,  for any nontrivial classical link $L$, $$c_3^T(L) < c_2(L) = c_3^{II,III}(L) =c_3^{II}(L)$$ by folding along a covering collection of circles of a minimal 2-crossing projection. In fact, if $L$ is not a 2-braid link, then $c_3^T(L) \leq c_2(K) - 2$. (See \cite{Adams1}).\\

\noindent {\bf Question 1:} We know that for any classical link,  $c_2^T(L) = c_2(L)$. Is it also true that $c_3^T(L) = c_3(L)$? \\

\noindent {\bf Question 2:} Where do $c_3^{I,II}(L)$ and $c_3^{II,III}(L)$ fit in between  $c_3^T(L)$ and $c_3^{II}(L)$? We have examples where $c_3^{I,II}(L) < c_3^{II,III}(L)$. Do there exist examples where $c_3^{I,II}(L) > c_3^{II,III}(L)$?\\

\noindent {\bf Question 3:} Is $c_3^{I, III}(L)$  well-defined?

\section{Almost Virtual Multicrossings}
We can generalize the concept of a type II triple crossing to multicrossings of higher order.
\begin{definition}
An $n$-crossing is {\it almost virtual} if exactly one of its constituent 2-crossings is classical. A projection of a link is said to be an {\it $n$-almost-virtual} projection if all of its crossings are almost virtual $n$-crossings.
The $n$-almost-virtual crossing number of a link $L$, written $c_n^{AV}(L)$, is the least number of almost virtual $n$-crossings in any $n$-almost-virtual projection of $L$.
\end{definition}

To show that $c_n^{AV}(L)$ is well defined over all links, we provide an algorithm to generate an $n$-almost-virtual projection from any 2-projection. We use a generalized version of the approach from \cite{Adams6}. Note that any almost virtual $3$-crossing can be turned into an almost virtual $5$-crossing by the detour move shown in Figure \ref{fig:av-zig-zag}. A similar trick works to convert any almost virtual $n$-crossing into an almost virtual $(n+2)$-crossing. Since every link has a projection consisting of only type II triple crossings, which is an almost virtual 3-crossing projection, all links have almost virtual $n$-crossing projections for all odd $n$. By the series of Reidemeister moves depicted in Figure \ref{fig:2-to-4-av}, we can convert any 2-crossing projection into a 4-almost-virtual projection. Therefore, every link has an $n$-almost-virtual projection for all nonnegative $n$.

\begin{figure}[htbp]
  \centering
  \includegraphics[scale=.7]{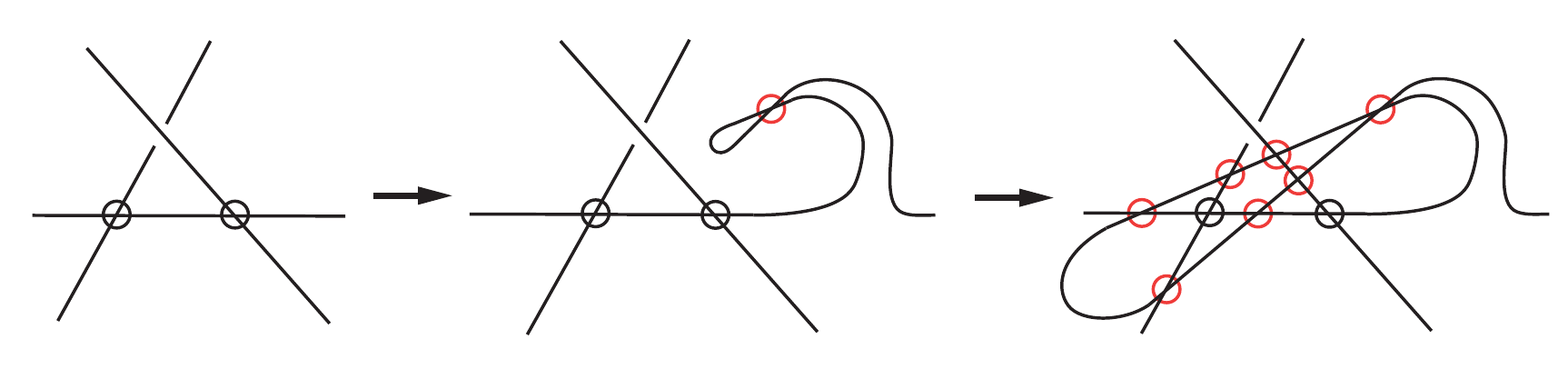}
  \caption{Transforming an almost virtual $3$-crossing into an almost virtual $5$-crossing by a detour move.}
  \label{fig:av-zig-zag}
\end{figure}

\begin{figure}[htbp]
  \begin{subfigure}{1.0\linewidth}
  \center
    \includegraphics[scale=.8]{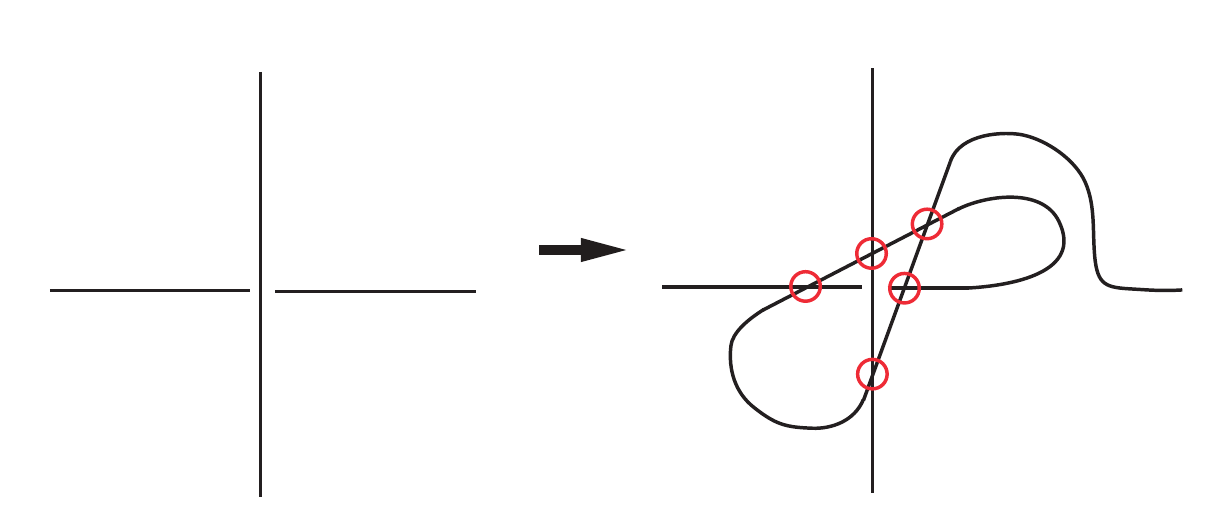}
    \caption{Classical crossing}
  \end{subfigure}
  \hspace*{\fill}
  \begin{subfigure}{1.0\linewidth}
  \center
    \includegraphics[scale=.7]{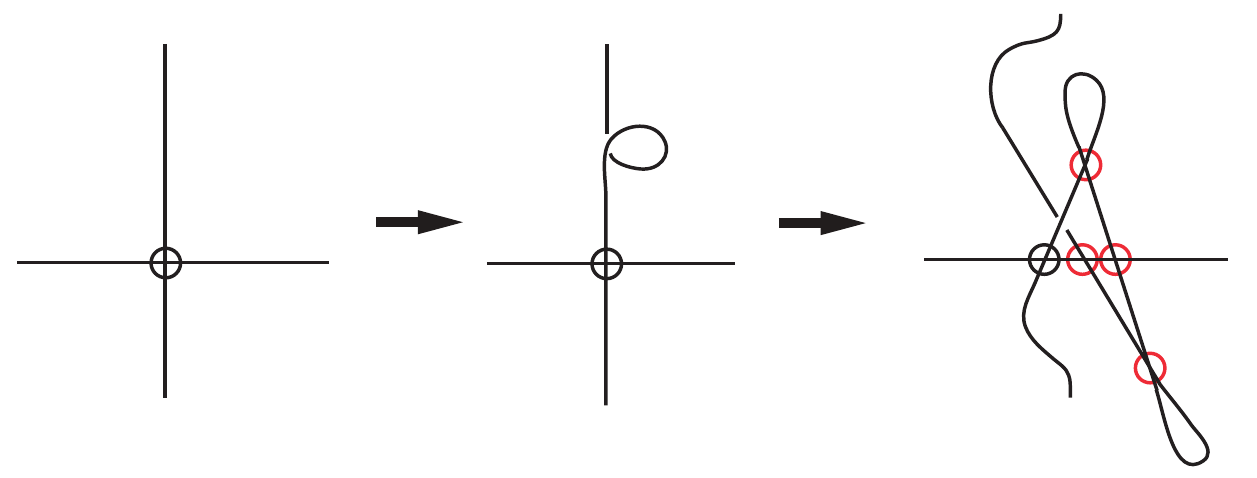}
    \caption{Virtual crossing}
  \end{subfigure}
  \caption{Transforming a 2-crossing into an almost virtual 4-crossing.}
  \label{fig:2-to-4-av}
\end{figure}

For the same reason that $c_3^{II}(L) = c_2(L)$ for any classical link, we have $c_n^{AV}(L) = c_2(L)$ for any classical link $L$ and all integers $n \geq 2$.

Note that not all almost virtual $n$-crossings are the same. For example, the classical 2-crossing in an almost virtual 4-crossing may be either shared between two adjacent strands or two nonadjacent strands. In terms of Definition~\ref{virtualncrossing}, an almost virtual crossing can be described by any of the $n!$ possible orderings for the heights of the strands and one of the $\tbinom{n}{2}$ pairs for the classical crossing. However, looking on the resolutions of these multicrossings, we obtain only $n-1$ nonequivalent almost virtual $n$-crossings up to rotation, depending on the clockwise distance from the over crossing to the under crossing in the classical pair. Similarly, up to rotation and reflection, there are $\lfloor \frac{n}{2} \rfloor$ different almost virtual $n$-crossings for all $n \geq 2$. \\


\noindent {\bf Question 4:} Generalizing Question 1, is it true for classical links that $c_n^T(L) = c_n(L)$? 

\section{Counting Virtual \textit{n}-Crossings}


We have seen that a multicrossing projection of a virtual knot may have fewer crossings than a double crossing one. On the other hand, there are more possible types of crossings. It is useful to consider how many different virtual $n$-crossings there are.

Consider a virtual multicrossing of $n$ straight arcs, labeled $\{\alpha_1,\dots,\alpha_n\}$, say clockwise. When the multicrossing is resolved, some pairs of arcs cross virtually and some classically, and we first explore the different ways this can happen.

Define a symmetric binary relation on the arcs: $\alpha_i \sim \alpha_j$ iff either $\alpha_i$ and $\alpha_j$ cross classically or $\alpha_i = \alpha_j$. The condition in Definition~\ref{virtualncrossing} that the virtual multicrossing is valid is now equivalent to the transitivity of this relation. Indeed, a forbidden configuration arises exactly when $\alpha_i \sim \alpha_j$ and $\alpha_j \sim \alpha_k$ but $\alpha_i \not\sim \alpha_k$. The validity condition hence says that ``$\sim$'' is an equivalence relation. Such a relation is uniquely defined by the partition of the set $\{\alpha_1,\dots,\alpha_n\}$ into equivalence classes, which are maximal subsets of arcs that pairwise cross classically.

The number of such \textit{set partitions} of $n$ elements is the well-studied $n$th \textit{Bell number}, denoted~$B_n$. This sequence has a super-exponential asymptotic growth: $B_n$ is roughly $n!/(e \log n)^n$, up to a sub-exponential factor as $n \to \infty$~\cite[p.~562]{flajolet2009analytic}. See OEIS~\cite[A000110]{oeis} for more information on these numbers. 

In order to describe the virtual multicrossing in full, we also have to specify the relative heights of the arcs. These are needed to determine whether one arc crosses over or under another one, whenever they are in the same equivalence class. Therefore, the additional information is precisely a linear ordering of the elements of each part in the partition. Following~\cite[p.~563]{flajolet2009analytic}, we refer to partitions into ordered parts as \textit{fragmented permutations}. Their number is given by~$F_n = \sum_{k=1}^{n}\tbinom{n-1}{k-1}\frac{n!}{k!}$, where the $k$th term corresponds to partitions into $k$ parts. See also OEIS~\cite[A000262]{oeis}.

Finally, we may want to identify virtual multicrossings that are rotations of one another, differing only by the arbitrary choice of the arc labeled by $\alpha_1$. Formulas that count set partitions up to rotation are given in~\cite[A084423]{oeis}. Similarly, \cite[A084708]{oeis} also takes reflections into account. However, we are interested in the corresponding quantities for fragmented permutations of $\{\alpha_1,\dots,\alpha_n\}$, where the parts are ordered. We denote the desired count by~$V_n$.

It follows from the above discussion that $F_n$ is an overcount. Counting with labels, the same multicrossing might be counted $n$ times. This happens when all $n$ rotations of the multicrossing are different. Some crossings are counted fewer times, such as the completely virtual case, counted once. This already means that $F_n/n \leq V_n \leq F_n$, not a dramatic variation considering the trivial lower bound $F_n \geq n!$ for example.

If $n$ is prime then the only two possibilities are the above-mentioned cases, overcount by $n$ and all-virtual, as no other multicrossing has a rotational symmetry. 
This leads to the following formula for prime $p$:
$$ V_p \;=\; \frac{F_p-1}{p}+1 $$

For example, using $F_2 = 3$ we obtain $V_2 = 2$ types of 2-crossings, as we know, virtual and classical. Similarly $F_3 = 13$ yields $V_3=5$ types as in Figure \ref{fig:possibletriples}, which are: two classical 3-crossing of Type I in our terminology, two mixed of Type II, and one purely virtual of Type III. Note that the two different triple crossings in each of Types I and II are mirror images of each other.

 \begin{figure}[htbp]
  \centering
  \includegraphics[scale=.6]{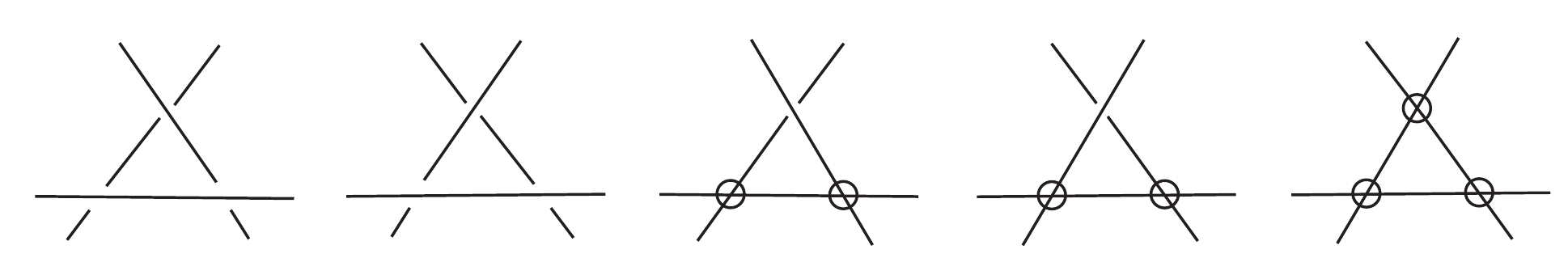}
  \caption{The five possible virtual triple crossings up to rotation.}
  \label{fig:possibletriples}
\end{figure}

For composite $n$, there are more kinds of overcount. For example, consider the virtual 4-crossing where $\alpha_1$ goes over $\alpha_2$, $\alpha_3$ over $\alpha_4$, and the remaining crossings are virtual. This is readily invariant under clockwise rotation forward by two arcs, but not by one. Thus it is counted twice in $F_4$, not four times, neither once.

The remedy for such complications is Burnside's Lemma for counting orbits of a group action. In our case, it says $V_n = \tfrac{1}{n}\sum_{r=1}^{n} \mathrm{Fix}(n,r)$, where $\mathrm{Fix}(n,r)$ is the number of labeled virtual $n$-crossings invariant under rotation by $r$ arcs. Replacing $r$ by $d=\mathrm{gcd}(n,r)$, this sum can be rewritten as $\tfrac{1}{n}\sum_{d|n} \varphi(\tfrac{n}{d})\mathrm{Fix}(n,d)$, where $\varphi$ is Euler's totient function. 

In a labeled virtual $n$-multicrossing invariant under rotation by $d|n$, the crossings between the arcs in a congruence class $\{\alpha_i,\alpha_{i+d},\alpha_{i+2d},\dots\}$ must be all virtual. Indeed, the rotations generated by $d$ act transitively on these arcs, and no crossing with classically ordered arcs is invariant under all these rotations. A valid invariant multicrossing can hence be described by picking one representative from every congruence class, and a fragmented permutation that determines the relations between the $d$ arcs, and then all classically crossing pairs are generated by their $\tfrac{n}{d}$ rotations. This gives $(\tfrac{n}{d})^dF_d$ ways to describe labeled virtual $n$-crossings invariant under rotations by~$d$. However, shifting the representatives in a whole ordered part together by~$d$ does nothing, so we have to divide by $(\tfrac{n}{d})^k$ where $k$ is the number of parts in the fragmented permutation. Using Burnside's Lemma and the refined sum for~$F_d$ mentioned above, the exact enumeration is as follows.

\begin{proposition}
The number of types of virtual $n$-crossings, considered up to rotation, is
$$ V_n \;=\; \frac{1}{n} \sum\limits_{d|n} \varphi\left(\frac{n}{d}\right) \sum\limits_{k=1}^{d} \binom{d-1}{k-1} \frac{d!}{k!} \left(\frac{n}{d}\right)^{d-k} $$
\end{proposition}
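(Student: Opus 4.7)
The plan is to invoke Burnside's Lemma, as already sketched in the preamble to the statement. The cyclic group $\mathbb{Z}/n\mathbb{Z}$ acts on the set of labeled valid virtual $n$-crossings by rotation of arc labels, and orbits are exactly the unlabeled types counted by $V_n$. Writing $\mathrm{Fix}(n,r)$ for the number of labeled multicrossings fixed by rotation by $r$ arcs, I first reduce $V_n = \frac{1}{n}\sum_{r=0}^{n-1}\mathrm{Fix}(n,r)$ to a sum over divisors. Since rotation by $r$ and rotation by $d := \gcd(n,r)$ generate the same cyclic subgroup they have equal fixed sets, and exactly $\varphi(n/d)$ residues $r \in \{0,\dots,n-1\}$ yield a given $d \mid n$. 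This gives $V_n = \frac{1}{n}\sum_{d\mid n}\varphi(n/d)\,\mathrm{Fix}(n,d)$, and the remaining task is to establish
$$ \mathrm{Fix}(n,d) \;=\; \sum_{k=1}^{d}\binom{d-1}{k-1}\frac{d!}{k!}\left(\frac{n}{d}\right)^{d-k}. $$

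To count $\mathrm{Fix}(n,d)$, I first record the constraint, already noted in the text, that arcs in a common congruence class $C_i = \{\alpha_i,\alpha_{i+d},\ldots,\alpha_{i+(n/d-1)d}\}$ must pairwise cross virtually: otherwise a classical pair within $C_i$ would be rotated by multiples of $d$ to produce a cyclic over/under chain on the arcs of $C_i$. I then parameterize invariant multicrossings by a pair consisting of a fragmented permutation on the representatives $\{\alpha_1,\ldots,\alpha_d\}$ with $k$ ordered parts $P_1,\ldots,P_k$ (there are $\binom{d-1}{k-1}\frac{d!}{k!}$ such with $k$ parts), together with a shift vector $(s_1,\ldots,s_d) \in (\mathbb{Z}/(n/d))^d$. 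From this data, each part $P_\ell = (\alpha_{i_1},\ldots,\alpha_{i_m})$ generates $n/d$ ordered parts of the full $n$-arc fragmented permutation, namely $(\alpha_{i_1+(s_{i_1}+j)d},\ldots,\alpha_{i_m+(s_{i_m}+j)d})$ for $j = 0,\ldots,n/d-1$.

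The main accounting step is the overcount. Shifting every $s_{i_t}$ with $i_t \in P_\ell$ by a common constant $t_\ell \in \mathbb{Z}/(n/d)$ merely reindexes $j$ among the $n/d$ translates generated by $P_\ell$ and leaves the resulting $n$-arc fragmented permutation unchanged. This defines an action of $(\mathbb{Z}/(n/d))^k$ on the $(n/d)^d$ shift vectors attached to a fixed representative permutation, which I claim is free, so each invariant multicrossing is counted exactly $(n/d)^k$ times. Summing over $k$ gives the displayed formula for $\mathrm{Fix}(n,d)$, and substituting into the Burnside sum yields the proposition. As a consistency check, the $d = n$ term reproduces $F_n$ (the total count) and the $d = 1$ term gives $1$ (only the all-virtual multicrossing is invariant under every rotation).

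I expect the delicate step to be verifying the bijectivity of this parameterization. Freeness of the $(\mathbb{Z}/(n/d))^k$ action should follow because distinct parts of the representative permutation involve disjoint subsets of $\{1,\ldots,d\}$, hence their generated parts lie in disjoint congruence classes and cannot be matched to one another by any relabeling of $j$. Surjectivity is more subtle: given an invariant multicrossing, I must argue that picking one classical-equivalence class from each rotation orbit yields classes whose congruence-class label sets partition $\{1,\ldots,d\}$. This uses the observation that the orbit under rotation by $d$ of any single class sweeps out all $n/d$ arcs of each congruence class the class touches, so no congruence label can appear in two different orbits, and every label must appear because every arc belongs to some class.
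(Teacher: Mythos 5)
Your proposal is correct and follows essentially the same route as the paper: Burnside's Lemma reduced to a sum over divisors, the observation that congruence classes modulo $d$ must cross internally virtually, the parameterization of an invariant multicrossing by a fragmented permutation on $d$ representatives together with a choice of representative (your shift vector) in each congruence class, and division by $(n/d)^k$ because shifting a whole ordered part does nothing. Your version merely makes explicit the freeness and surjectivity of this parameterization, which the paper leaves as an informal remark.
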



\begin{corollary}
For $n=2,3,\dots,10$, the number of virtual $n$-crossings up to rotation is respectively $$ 2,\; 5,\; 20,\; 101,\; 684,\; 5377,\; 49342,\; 510745,\; 5894550 $$
\end{corollary}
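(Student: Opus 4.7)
The Corollary reduces to a finite arithmetic verification once the formula from the Proposition is accepted, so the plan is simply to evaluate $V_n$ at each $n = 2, 3, \ldots, 10$. For each $n$, I would enumerate the divisors $d \mid n$, compute $\varphi(n/d)$, and tabulate the inner sum $\sum_{k=1}^{d} \binom{d-1}{k-1} \frac{d!}{k!} (n/d)^{d-k}$; the weighted total divided by $n$ then yields $V_n$. Note that when $d = n$ the inner sum is exactly the refined expression $F_n = \sum_{k=1}^{n} \binom{n-1}{k-1} n!/k!$ already given in the paper, so the largest divisor contributes $F_n$ and the smaller divisors provide lower-order corrections.

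For the four prime values $n \in \{2, 3, 5, 7\}$, only the divisors $1$ and $p$ occur, and the formula collapses to the simpler expression $V_p = (F_p - 1)/p + 1$ already recorded just before the Proposition. Substituting the tabulated values $F_2 = 3$, $F_3 = 13$, $F_5 = 501$, $F_7 = 37633$ from OEIS A000262 (or re-deriving them from the displayed formula for $F_n$) immediately produces the four entries $2$, $5$, $101$, $5377$ in the stated list.

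For the five composite values $n \in \{4, 6, 8, 9, 10\}$, I would organize the computation as a small divisor table with one row per $d \mid n$. A representative case is $n = 4$ with divisors $1, 2, 4$: the contributions are $\varphi(4)\cdot 1 = 2$, $\varphi(2)\cdot(2\cdot 2 + 1) = 5$, and $\varphi(1)\cdot F_4 = 73$, summing to $80$ and giving $V_4 = 80/4 = 20$. The cases $n = 6, 8, 10$ involve four divisors each and $n = 9$ only three, so the tables remain short, and only the values $F_d$ for $d \le 10$ are needed as inputs.

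The only genuine obstacle is arithmetic bookkeeping: each summand packages four factors $\varphi(n/d)$, $\binom{d-1}{k-1}$, $d!/k!$, and $(n/d)^{d-k}$, and a single slip propagates into a wrong final count. To guard against this, I would cross-reference the computed $V_n$ against the small-$n$ enumerations already validated in the paper (in particular $V_3 = 5$ against the five triple crossings shown in Figure \ref{fig:possibletriples}) and, where the sequence is indexed, against OEIS.
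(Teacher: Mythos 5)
Your proposal is correct and matches the paper's (implicit) proof exactly: the corollary is a direct numerical evaluation of the Proposition's formula, and your divisor-table computations check out (e.g.\ the $n=4$ contributions $2+5+73=80$ giving $V_4=20$, and the prime-case collapse to $V_p=(F_p-1)/p+1$ is consistent with the general formula since the $d=1$ term contributes $\varphi(p)=p-1$). Nothing further is needed.
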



Finally, we derive an asymptotic answer. It is based on the observation that the term $d=n$ in the above sum is overwhelmingly more dominant than the other terms, of $d \leq n/2$. This essentially reflects the fact that any rotational symmetries in a typical large $n$-crossing become rare.

In more detail, the inner sum over $\tbinom{d-1}{k-1}(d/n)^k/k!$ is clearly increasing in~$d$, and the remaining factor $({n}/{d})^dd!$ contains $d$ of the $n$ terms in the product~$n!$, so its value for $d \leq n/2$ is super-exponentially smaller than for $d=n$. Thus $V_n$ is asymptotically equivalent to $F_n/n$ also in the composite case. Dividing by $n$ the estimate of $F_n$ from  \cite[p.~563]{flajolet2009analytic} yields the following

\begin{corollary}
The number of  virtual $n$-crossings up to rotation is
$$ V_n \;\sim\; \frac{n! \, e^{2\sqrt{n}}}{2\sqrt{\pi e} \, n^{7/4}} $$
in the sense that the ratio between the two sides of the equation converges to $1$ as $n \to \infty$.
\end{corollary}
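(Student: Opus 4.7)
The plan is to formalize the sketch given just before the statement: show that in the exact formula of the Proposition, the $d=n$ summand dominates so that $V_n \sim F_n/n$, and then invoke the classical Flajolet--Sedgewick asymptotic $F_n \sim n!\,e^{2\sqrt{n}-1/2}/(2\sqrt{\pi}\,n^{3/4})$. Since $e^{-1/2}/\sqrt{\pi} = 1/\sqrt{\pi e}$, dividing this by $n$ produces exactly the claimed right-hand side. So the content of the proof is entirely in showing $V_n = F_n/n + o(F_n/n)$.

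The first step is immediate: plugging $d=n$ into the Proposition gives $\varphi(1)\sum_{k=1}^{n}\binom{n-1}{k-1}\frac{n!}{k!}\cdot 1^{n-k} = F_n$. The remaining divisors all satisfy $d \le n/2$. Factoring $(n/d)^d$ out of the inner sum and using $(d/n)^k \le 1$ termwise, each such contribution is at most $\varphi(n/d)\,(n/d)^d\,F_d$. The key estimate is then
\[
\frac{(n/d)^d\,d!}{n!} \;=\; \frac{(n/d)^d}{(d+1)(d+2)\cdots n} \;=\; \exp\!\bigl(-(n-d)(\log n - 1) + O(\log n)\bigr),
\]
which follows from Stirling after a short manipulation, and which for $d \le n/2$ decays super-exponentially like $\exp(-\tfrac{n}{2}\log n)$. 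Combining this with $F_d/d! = e^{O(\sqrt{d})}$, $\varphi(n/d) \le n$, and the crude bound of at most $n$ divisors, the entire tail is at most $n^{O(1)}\,e^{O(\sqrt{n})}\,n!\,e^{-(n/2)\log n}$, vastly smaller than $F_n \asymp n!\,e^{2\sqrt{n}}/n^{3/4}$.

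The main obstacle is keeping the Stirling estimate uniform across all divisors $d \le n/2$, since $(n/d)^d d!$ itself is not monotonic in $d$ and peaks near the upper end of the range. This is handled automatically by the observation that the next divisor of $n$ below $n$ is at most $n/2$, so $n-d \ge n/2$ throughout, and the factor $e^{-(n-d)(\log n-1)}$ dominates every polynomial and sub-exponential correction. Once $V_n \sim F_n/n$ is established, substituting the Flajolet--Sedgewick asymptotic and simplifying completes the proof.
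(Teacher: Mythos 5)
Your proposal is correct and follows essentially the same route as the paper's own argument: isolate the $d=n$ term of the Proposition as $F_n$, show that every divisor $d\le n/2$ contributes super-exponentially less because $(n/d)^d\,d!$ is dwarfed by $n!$, conclude $V_n\sim F_n/n$, and substitute the Flajolet--Sedgewick asymptotic for $F_n$ (with $e^{-1/2}/\sqrt{\pi}=1/\sqrt{\pi e}$). The only cosmetic difference is that you bound $(n/d)^d d!/n!$ by Stirling, whereas the paper observes directly that $(n/d)^d d!=\prod_{j=1}^{d}(jn/d)$ is the product of just $d$ of the $n$ factors of $n!$; both give the same uniform super-exponential decay for $d\le n/2$.
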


\section{Petal Diagrams of Virtual Knots}
 


Given a classical knot, it was proved in \cite{Adams2} that it has a petal diagram,  as in Figure \ref{petalexample}. Each line segment is assigned a height with the top labelled~1 and the bottom labelled $n$, which then determines the crossings in any resolution of the single multi-crossing. We extend that result to virtual knots here, by putting a virtual $n$-crossing at the center of the petal diagram.

\begin{theorem} Any virtual knot has a virtual petal diagram.
\end{theorem}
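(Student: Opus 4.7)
The plan is to adapt the classical petal diagram construction of \cite{Adams2} to the virtual setting, with extra care taken to ensure that the central multicrossing is valid in the sense of Definition~\ref{virtualncrossing}.

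I would start with any virtual projection $D$ of $K$ and produce a \emph{virtual übercrossing projection}: a diagram with a single virtual multicrossing at a central point with loops (possibly nested) attached. To construct this, I isotope strands of $D$ so as to pull every crossing toward the center one at a time. Whenever moving a strand forces it to cross another strand it did not cross before, the new crossings can be introduced virtually using a detour move, which preserves the virtual isotopy class. The resulting central multicrossing has classical pairs corresponding only to the original classical crossings of $D$; by subdividing petals if necessary, I arrange that no strand is involved in more than one classical pair at the center.

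Under this arrangement, the classical equivalence classes at the center are disjoint two-element sets (together with some singleton classes), so the validity condition from Definition~\ref{virtualncrossing} and Section~4 is satisfied automatically. Next, I remove any nested loops via the un-nesting procedure of \cite{Adams2}: an outer loop enclosing an inner one is pulled through the central multicrossing, introducing additional strands. These added strands are routed so that they cross all existing strands virtually, each forming a singleton classical class at the center. Validity is thereby preserved, and after iterating we obtain a diagram with no nested loops, which is a virtual petal diagram for $K$.

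The main obstacle is controlling the classical-pair structure at the central multicrossing throughout the isotopies, detour moves, and un-nesting operations. The key observation is that every newly created pairwise strand interaction can be made virtual via a detour, so the classical pairs depend only on the original classical crossings of $D$. After subdivision these form a partial matching on the strands, which yields a valid equivalence relation automatically and hence a valid virtual multicrossing at the center.
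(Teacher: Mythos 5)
Your route is genuinely different from the paper's: you try to adapt the geometric \"ubercrossing/un-nesting machinery of \cite{Adams2} to a virtual diagram, pushing crossings to the center and declaring all incidental new crossings virtual, whereas the paper builds the petal diagram from scratch out of the signed Gauss code, allotting three segments per classical crossing ($3n$ or $3n+1$ petals) and then checking that the resulting diagram has the prescribed code. Your key structural observation --- that if every segment lies in at most one classical pair then the classical classes are singletons and pairs, so transitivity and hence validity under Definition~\ref{virtualncrossing} is automatic --- is exactly the observation the paper relies on at the end of its proof, so the validity half of your argument is sound.

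The gap is in the step you treat as routine: producing a single valid virtual multicrossing at all. A virtual $n$-crossing requires all $n$ strands to pass through one point, each pair meeting exactly once, with \emph{no} crossings of any kind on the petals outside the center; ``pulling crossings toward the center one at a time'' plus detour moves does not obviously terminate in such a configuration, and the classical construction you invoke does not apply verbatim to diagrams containing virtual crossings (its inductive merging step assigns heights to newly created classical crossings so that they cancel, which is a different mechanism from making them virtual). Moreover, once the segments are forced into the rigid cyclic order of a petal diagram, the direction in which two given segments cross at the center is determined by their endpoint positions, so one cannot freely realize a classical crossing of prescribed sign between a prescribed pair of segments. The paper handles this concretely by assigning \emph{two} segments to the second appearance of each crossing in the Gauss code, one crossing the first segment left-to-right and one right-to-left, and selecting whichever matches the required over/under and sign data; your ``subdivide so each strand has one classical pair'' does not by itself supply this choice. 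To close the gap you would either need to carry out the adapted \"ubercrossing and un-nesting steps as explicit sequences of classical and virtual Reidemeister/detour moves (so that the knot type, and hence the signs, are preserved automatically), or else switch to a bottom-up construction and verify the resulting signed Gauss code as the paper does.
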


\begin{proof} We show that there exists an algorithm for putting a given virtual knot into a diagram with a single valid virtual multi-crossing at the center and no nested petals. To do so, we use signed Gauss codes, as described in ~\cite[Section 3.2]{kauffman}. Any virtual knot diagram can be delineated by a signed Gauss code obtained as follows. Given a knot diagram $K$ with $n$ classical crossings, pick an orientation for the knot, and then start on the projection at any given point, and travel in the direction of the orientation. At each classical crossing that is encountered, give it a new integer if it is new, or the same integer it was previously given if we have encountered it before. Record that number. This yields a list of $2n$ integers. If we are traversing the crossing on an overstrand, precede the digit with an O, and if on an understrand, precede it with a U. Finally for each digit, add a $+$ or $-$ after the digit depending on whether the crossing is a $+$ crossing or a $-$ crossing, see Figure~\ref{crossingsigns} below. For each virtual crossing, we ignore it. The resulting signed Gauss code can be shown to correspond to a unique virtual knot. For example, O1+U2+U1+O2+ is a virtual trefoil. We will show that given any valid Gauss code, there is a petal diagram that corresponds to it, and therefore every virtual knot has a petal diagram.

Given a signed Gauss code of length $2n$, which would correspond to a virtual knot diagram with $n$ classical crossings, we construct  a petal diagram for it with a central $3n$-crossing if $n$ is odd and a central $(3n+1)$-crossing if $n$ is even.  We orient the petal projection so that we move clockwise around it, and choose a starting point represented by a black dot on the outside of a petal. Moving clockwise from that point, we designate the next line segment through the center to be labelled 1 and all subsequent line segments are denoted by the subsequent integers. Write the list of line segments in order beneath the Gauss code, associating one line segment with each first appearance of a crossing number and two line segments with each second appearance of a crossing number. So for instance we have a correspondence as seen in Table \ref{tab:Gausscodetable}.

\begin{table}[htbp]
\centering
\begin{tabular}{c|c|c|c|c|c|c}
Gauss code &  O1+ & U2+ & U1+ & O3- & O2+ & U3-\\
Line segment    & 1& 2& 3 4 & 5  & 6 7 & 8 9 \\
\end{tabular}
\\[0.5em] \phantom{...}
\caption{Constructing a petal diagram for a virtual knot diagram using the signed Gauss code.}
\label{tab:Gausscodetable}
\end{table}

From this, we construct a resolution of our petal diagram. Given a particular classical crossing, we consider the first segment associated to its appearance in the code. The two segments associated with its second appearance will cross the first segment one right to left and one left to right. One of those will be in alignment with the over/under and sign for the crossing, as in Figure \ref{crossingsigns}. 

\begin{figure}[htbp]
\centering
\includegraphics[scale=.7]{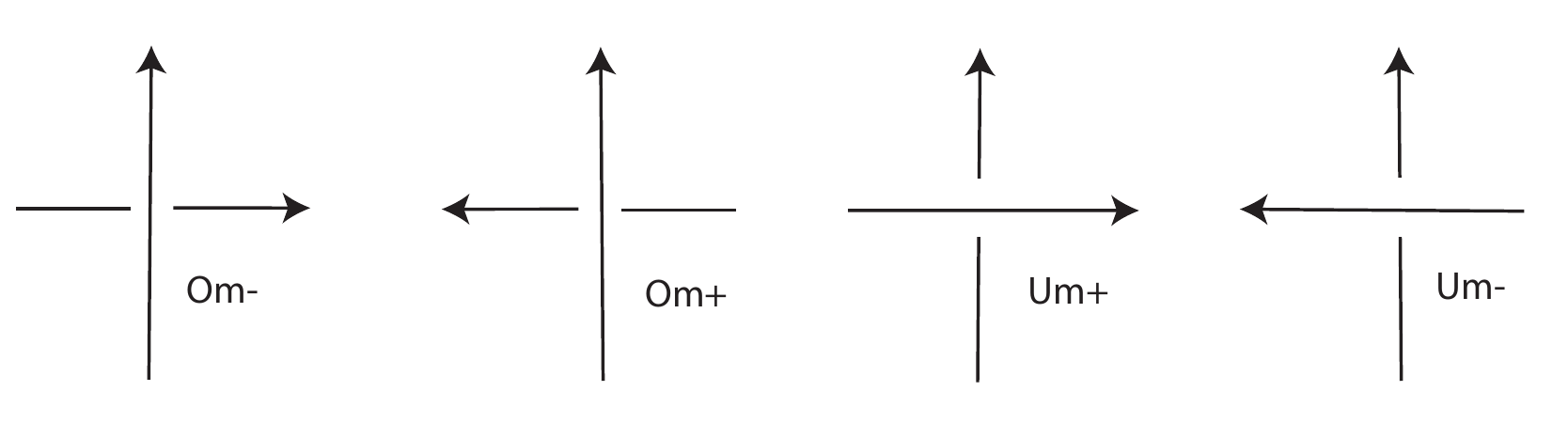}
\caption{We must respect the directions on the segments at a crossing labelled $m$.}
\label{crossingsigns}
\end{figure}

Put in a classical crossing between the first segment and the one whose direction respects the U/O and +/- labels. For each of the two crossing segments, make all other places that they cross another segment into virtual crossings. And for the segments we did not use, make all their crossings virtual. Since any two segments cross classically at most once, it is straightforward to choose heights for the line segments so that the classical crossings reflect the U's and O's. 

By the time we are done, we have a diagram of a virtual knot with signed Gauss code that matches the code with which we started. See Figure \ref{petalconstruction} for the example from Table \ref{tab:Gausscodetable}. Because no segment has more than one classical crossing, this satisfies the condition for the resulting virtual multi-crossing that no three segments generate two classical and one virtual crossing.
\end{proof}

\begin{figure}[htbp]
\centering
\includegraphics[scale=.5]{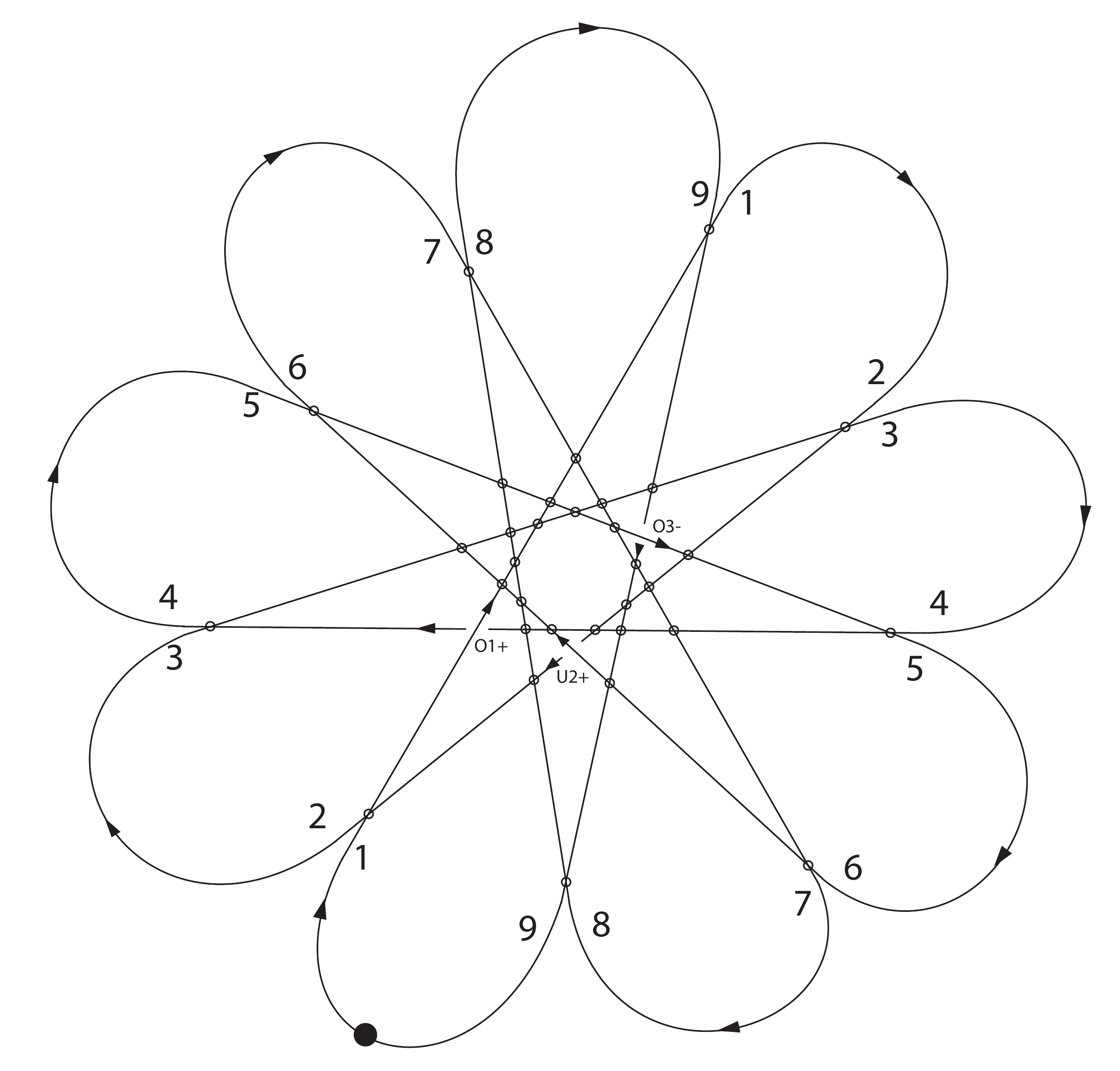}
\caption{Creating a petal diagram for signed Gauss code O1+ U2+ U1+ O3- O2+ U3-.}
\label{petalconstruction}
\end{figure}

\begin{definition} Given a virtual knot $K$, define its {\it virtual petal number} $p^V(K)$ to be the least number of petals in any virtual petal diagram for the knot. 
\end{definition} 

The algorithm does not necessarily generate the least virtual petal number, as is demonstrated by the virtual trefoil, for which the algorithm generates a virtual petal diagram with 7 petals, but in fact $p^V(VT) = 5$. However, it gives an upper bound that is linear in the number of classical crossings.

\begin{corollary}
Given a virtual knot $K$, $p^{V}(K) \leq 3c_2^{cl}(K)$ when $c_2^{cl}(K)$ is odd and $p^{V}(K) \leq 3c_2^{cl}(K) +1$ when $c_2^{cl}(K)$ is even.
\end{corollary}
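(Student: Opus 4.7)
The plan is simply to invoke the construction given in the proof of the preceding theorem applied to an optimal input. Choose a virtual diagram $D$ of $K$ which realizes the minimum number of classical crossings, so that $D$ has exactly $n := c_2^{cl}(K)$ classical crossings. Extract a signed Gauss code from $D$ using the recipe from Section~3.2 of \cite{kauffman}; this code has length $2n$. Feed this code into the algorithm described in the theorem to obtain a virtual petal diagram $P$ that realizes the same signed Gauss code, hence represents the same virtual knot $K$.

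Next I would count the petals of $P$. By construction, the number of petals equals the number of line segments through the central multicrossing, since each petal contributes exactly one such segment. The bookkeeping in the algorithm assigns one segment to each first appearance of a crossing label in the Gauss code and two segments to each second appearance, so each of the $n$ classical crossings contributes $1+2=3$ segments, giving $3n$ segments in total when $n$ is odd. When $n$ is even, the theorem's construction uses a central $(3n+1)$-crossing (so that the parity constraint for closing up the petal diagram into a single component is met), producing $3n+1$ segments and hence $3n+1$ petals.

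Therefore $P$ is a virtual petal diagram of $K$ with either $3n$ or $3n+1$ petals, according to the parity of $n = c_2^{cl}(K)$. Since $p^V(K)$ is the minimum number of petals over all virtual petal diagrams of $K$, we obtain $p^V(K) \leq 3c_2^{cl}(K)$ when $c_2^{cl}(K)$ is odd and $p^V(K) \leq 3c_2^{cl}(K)+1$ when $c_2^{cl}(K)$ is even.

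There is essentially no obstacle here: the theorem already does all the work, and this corollary is just the observation that if the input Gauss code is chosen to be shortest possible, the output petal count is bounded by a linear function of $c_2^{cl}(K)$. The only small subtlety worth making explicit is that $n$ is really the classical crossing number of the input code, not the virtual crossing number or total crossing number, and that is why the bound is phrased in terms of $c_2^{cl}(K)$ (as opposed to $c_2(K)$ or $c_2^T(K)$).
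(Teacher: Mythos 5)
Your proposal is correct and matches the paper's (implicit) argument exactly: the corollary is obtained by running the theorem's algorithm on a diagram realizing $c_2^{cl}(K)$ classical crossings, which yields a petal diagram with a central $3n$- or $(3n+1)$-crossing depending on parity. Your remark about why the bound is in terms of $c_2^{cl}(K)$ rather than $c_2^T(K)$ is a reasonable clarification, consistent with the paper.
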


In the case that the knot $K$ is classical, such a linear upper bound on the classical petal number $p(K)$ was established in~\cite{even2018distribution}, implying that also $p^V(K) \leq p(K) < 2c_2(K)$. This linear bound on the classical petal number cannot be improved in general, since $p(K) > c_2(K)$ for the infinite family of alternating knots~ \cite{Adams2}, but it may be possible that virtual petal diagrams are more efficient for classical knots.\\

\noindent {\bf Question 5:} Can a classical knot have virtual petal number less than its classical petal number?\\

\noindent {\bf Question 6:} Does there exist a linear lower bound of the form $p^{V}(K) \geq \alpha\,c_2^{cl}(K)$ for an infinite family of classical or virtual knots?\\

We remark that although our construction implies that a classical knot has a virtual petal diagram, it cannot replace the previous proof that a classical knot has a classical petal diagram~\cite{Adams2}. There are no moves known that would preserve the petal diagram while eliminating the virtual crossings.

\bibliography{references}{}
\nocite{*}
\bibliographystyle{plain}

\end{document}